\numberwithin{equation}{section}
\newtheorem{teo}{Theorem }[section]
\newtheorem{lem}[teo]{Lemma}
\newtheorem{defn}[teo]{Definition}
\begin{document}


\title[ Higher-Order Equations ]
      {Higher-order stationary dispersive equations on bounded intervals: a relation between the order of an equation and the growth of its convective term }
      
\author{N. A. Larkin$^\dag$\;	\&
J. Luchesi}

\address{Nikolai A. Larkin \newline
Departamento de Matem\'{a}tica, Universidade Estadual de Maring\'{a},
Av. Colombo 5790: Ag\^{e}ncia UEM, 87020-900, Maring\'{a}, PR, Brazil}
\email{nlarkine@uem.br}

\address{Jackson Luchesi \newline
Departamento de Matem\'{a}tica, Universidade Tecnol\'{o}gica Federal do Paran\'{a} - C\^{a}mpus Pato Branco, Via do Conhecimento Km 1, 85503-390, Pato Branco, PR, Brazil} \email{jacksonluchesi@utfpr.edu.br}

\keywords {Dispersive equations, Regular solutions,  Existence, Uniqueness}
\thanks{}
\thanks{$^\dag$ N. A. Larkin was supported by Funda\c{c}\~ao Arauc\'{a}ria, Estado do Paran\'{a}, Brasil} \subjclass[]{}

\thanks{2010 Mathematics Subject Classification: 34L30, 34B30, 34B60}

\begin{abstract}
A boundary value problem for a stationary nonlinear dispersive equation of order $2l+1\;\; l\in \mathbb{N}$ with a convective term in the form $u^ku_x\;\; k\in \mathbb{N}$  was considered on an interval $(0,L)$. The existence, uniqueness and continuous dependence  of a regular solution as well as a relation between   $l$ and critical values of $k$  have been established.
	
\end{abstract}

\maketitle

\section{Introduction}\label{sec1}
This work concerns the existence, uniqueness and continuous dependence of regular
solutions to a boundary value problem for one class of nonlinear stationary dispersive  equations posed on bounded intervals
\begin{equation}\label{e1.1}
	au+\sum_{j=1}^l(-1)^{j+1}D^{2j+1}_x u +u^ku_x=f(x), \;\;l,k\in \mathbb{N},
\end{equation}
where $a$ is a positive constant. This class of stationary equations appears naturally while one wants to solve the corresponding evolution equation
\begin{equation}\label{e1.2}
	u_t+\sum_{j=1}^l(-1)^{j+1}D^{2j+1}_x u +u^ku_x=0, \;\;l,k\in\mathbb{N}
\end{equation}
making use of an implicit semi-discretization scheme:

\begin{equation}\label{e1.3}
	\frac{u^n-u^{n-1}}{h}+\sum_{j=1}^l(-1)^{j+1}D^{2j+1}_x u^n +(u^n)^ku^n_x=0, \;\;l\in\mathbb{N},
\end{equation}
where $h>0,$ \cite{Temam}. Comparing \eqref{e1.3}  with \eqref{e1.1}, it is clear that $a=\frac{1}{h}>0$ and $f(x)=\frac{ u^{n-1}}{h}.$ The case $k=1$ has been studied in \cite{larluch}. 
\par For $l=1$, we have the well-known generalized Korteweg-de Vries (KdV) equation which has been studied intensively for critical and supercritical values of $k$. In \cite{farah,lipaz,martel,merle} it was proved that a supercritical equation does not have global solutions and a critical one has a global solution for "small" initial data and the right-hand side. For $l=2, k=2$ the generalized  Kawahara equation has been studied in \cite{araruna}.
Initial value problems for
the Kawahara equation, $l=2$, which had been derived in \cite{kawa} as a perturbation of the KdV equation, have been considered in
\cite{biagioni,cui,farah,linponce,huo,kato,ponce2,ponce1,pilod,saut2} and attracted attention due to various applications of those
results in mechanics and physics such as dynamics of long
small-amplitude waves in various media \cite{hasim,jeffrey,kaku}.
On the other hand, last years appeared publications on solvability of
initial-boundary value problems for various dispersive equations (which
included the KdV and Kawahara equations) in bounded and unbounded domains
\cite{araruna,bona3,bubnov,colin,familark,khanal,kuvsh,larkin,larluch,marcio2}.
In spite of the fact that there is not some clear physical
interpretation for the problems on bounded intervals, their  study
is motivated by numerics \cite{chile}.
The KdV and Kawahara equations have been
developed for unbounded regions of wave propagations, however, if
one is interested in implementing numerical schemes to calculate
solutions in these regions, there arises the issue of cutting off a
spatial domain approximating unbounded domains by bounded ones. In
this case, some boundary conditions are needed to specify a
solution. Therefore, precise mathematical analysis of mixed problems
in bounded domains for dispersive equations is welcome and attracts
attention of specialists in this area
\cite{araruna,bona3,bubnov,colin,doronin2,
	familark,larkin}. \par As a
rule, simple boundary conditions at $x=0$ and $x=1$ such as
$u=u_x=0|_{x=0},\; u=u_x=u_{xx}=0|_{x=1}$ for the Kawahara equation were imposed.
Different kind
of boundary conditions was considered in \cite{ colin, lar2}.
Obviously,  boundary conditions for \eqref{e1.1} are the same as for \eqref{e1.2}. Because of that, study of boundary value problems for \eqref{e1.1} helps to understand solvability of initial- boundary value problems for \eqref{e1.2}.
\par Last years,  publications on dispersive equations of higher orders  appeared  \cite{familark,linponce,ponce2,ponce1,tao}. Here, we propose \eqref{e1.1} as a  stationary analog of \eqref{e1.2} because the last  equation includes classical models such as the generalized KdV and Kawahara equations.

The goal of our work is to formulate a correct boundary value problem for \eqref{e1.1} and to prove the existence, uniqueness and continuous dependence on perturbations of $f(x)$ for regular solutions as well as to study relation between the term $l$ of equation and the critical values of $k$. 

The paper has the following structure. Section 1 is Introduction.
Section 2 contains formulation of the problem and main results of the article. In Section 3 we give some useful facts. In Section 4 the existence of a regular solutions for the problem is proved. Here, a connection between the order of the equation and the growth of its convective term is established. Finally, in Section 5 uniqueness is proved provided certain restriction on $f$ as well as continuous dependence of solutions.

\section{Formulation of the Problem and Main Results}\label{sec2}
For real $a>0$, consider the following one-dimensional stationary higher order equation:
\begin{equation}\label{e2.1}
	au+\sum_{j=1}^{l}(-1)^{j+1}D^{2j+1}u+u^kDu=f\quad \text{in } (0,L)
\end{equation}
subject to boundary conditions:
\begin{equation} \label{e2.2}
	D^{i}u(0)=D^{i}u(L)=D^{l}u(L)=0,\,\, i=0,\ldots, l-1, \\
\end{equation}
where $0<L<\infty$, $l,k\in \mathbb{N}$ with $1\leq k \leq 4l$, $D^i={d^i}/{d x^i}$, $D^1\equiv D$  are the derivatives of order $i\in\mathbb{N}$, and $f\in L^2(0,L)$ is the given function.

Throughout this paper we adopt the usual notation $( \cdot , \cdot)$ for the inner product in $L^2(0,L)$ and $\| \cdot \|$, $\| \cdot \|_{\infty}$ and $\|\cdot \|_{H^i}$, $i\in\mathbb{N}$ for the norm in $L^2(0,L)$, $L^{\infty}(0,L)$ and $H^i(0,L)$, respectively \cite{adams}. Symbols $C_*$, $C_0$, $C_i$, $K_i$, $i\in \mathbb{N}$, mean positive constants appearing during the text.
\begin{defn} For a fixed $l\in \mathbb{N},$ equation \eqref{e2.1} is a regular one for $k<4l$ and is critical when $k=4l.$
\end{defn}
The main results of this article are the following theorems:

\begin{teo}\label{thm1} Let  $f\in L^2(0,L)$, then
	in the regular case, $1\leq k<4l$, problem \eqref{e2.1}-\eqref{e2.2} admits at least one regular solution $u\in H^{2l+1}(0,L)$ such that
	\begin{equation}\label{e2.3}
		\|u\|_{H^{2l+1}}\leq \mathcal{C} ((1+x),f^2)^{\frac{1}{2}}
	\end{equation}
	with the constant $\mathcal{C}$ depending only on $L$, $l$, $k$, $a$ and $((1+x),f^2)$.\\	
	In the critical case, $k=4l$, let $f$ be such that 
	\begin{equation}\label{e2.4}
		\|f\|<\frac{[(2l+1)(4l+2)]^{\frac{1}{4l}}a}{2^{\frac{1}{4l}}C_*}
	\end{equation}
	with $C_*$ an absolute constant. Then  problem \eqref{e2.1}-\eqref{e2.2} admits at least one regular solution $u\in H^{2l+1}(0,L)$ such that
	\begin{equation}\label{e2.5}
		\|u\|_{H^{2l+1}}\leq \mathcal{C}' ((1+x),f^2)^{\frac{1}{2}}
	\end{equation}
	with the constant $\mathcal{C}'$ depending only on $L$, $l$, $a$ and $((1+x),f^2)$.
\end{teo}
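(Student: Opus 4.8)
The plan is to construct solutions by a Faedo--Galerkin scheme combined with uniform a priori estimates, and then pass to the limit by compactness. First I would fix a basis $\{w_i\}$ of $H^{2l+1}(0,L)$ compatible with the boundary conditions \eqref{e2.2}, seek approximations $u^N=\sum_{i=1}^N c_i w_i$ solving the projected version of \eqref{e2.1}, and use Brouwer's fixed point theorem to produce $u^N$ for each $N$; the existence of the finite-dimensional solution follows once the a priori bound below is in force, since it confines the image of the associated continuous map to a ball.

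The heart of the matter is the chain of a priori estimates, all obtained by testing \eqref{e2.1} against carefully chosen multipliers and exploiting that \eqref{e2.2} was designed to produce favorably signed boundary contributions. I would begin by multiplying by $u$: every dispersive term $\int_0^L D^{2j+1}u\,u\,dx$ reduces to boundary data, the convective term $\int_0^L u^{k+1}Du\,dx$ vanishes, and one is left with the identity $a\|u\|^2+\tfrac12 (D^lu(0))^2=(f,u)$, which controls $\|u\|$. The decisive step is then to multiply by $(1+x)u$. Since $\phi=1+x$ satisfies $\phi'=1$, $\phi''=0$, integration by parts turns the leading dispersive term into a positive multiple of $\|D^lu\|^2$ (plus boundary terms and lower-order bulk controlled by the first estimate), while the convective term becomes $-\tfrac{1}{k+2}\int_0^L u^{k+2}\,dx$ and the right-hand side produces $\tfrac{1}{2a}((1+x),f^2)$, which is exactly the weighted quantity appearing in \eqref{e2.3}. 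Thus one must absorb $\int_0^L u^{k+2}\,dx$ into $\|D^lu\|^2$.

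This absorption is where the relation between $l$ and $k$ appears, and it is the main obstacle. By the Gagliardo--Nirenberg inequality $\|u\|_{L^{k+2}}\le C\|D^lu\|^{\theta}\|u\|^{1-\theta}$ with $\theta=\frac{k}{2l(k+2)}$, one gets $\int_0^L u^{k+2}\,dx\le C\|D^lu\|^{k/2l}\|u\|^{(k+2)(1-\theta)}$, whose exponent $k/2l$ on $\|D^lu\|$ is strictly less than $2$ precisely when $k<4l$ (the regular case), allowing unconditional absorption by Young's inequality; when $k=4l$ the exponent equals $2$ and absorption requires the coefficient in front to be small, which is guaranteed by hypothesis \eqref{e2.4} after tracking the sharp constant $C_*$. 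This yields a uniform $H^l$ bound.

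To climb to $H^{2l+1}$ I would iterate with the multipliers $(1+x)D^{2m}u$ for $m=1,\dots,l$: the leading term now produces $\|D^{l+m}u\|^2$, while the convective and lower-order contributions---being subcritical relative to the top derivative once $u\in H^l\hookrightarrow L^\infty$---are absorbed using Young's inequality and the norms already bounded, so that no new restriction on $k$ arises. This gives a uniform $H^{2l}$ bound, and solving \eqref{e2.1} for the top derivative, $(-1)^{l+1}D^{2l+1}u=f-au-\sum_{j=1}^{l-1}(-1)^{j+1}D^{2j+1}u-u^kDu$, bounds $\|D^{2l+1}u\|$ and completes \eqref{e2.3} (respectively \eqref{e2.5}) uniformly in $N$. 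Finally, the uniform $H^{2l+1}$ bound and the compact embedding $H^{2l+1}\hookrightarrow\hookrightarrow H^{2l}$ let me extract a subsequence $u^N\to u$ converging strongly in $H^{2l}$, hence in $C^{2l}$, which suffices to pass to the limit in the nonlinear term $u^kDu$ and in the linear terms and to verify that $u$ satisfies \eqref{e2.1}--\eqref{e2.2}. The delicate points I would watch throughout are the signs of all boundary terms generated by the higher multipliers and the sharpness of the constant in the critical absorption.
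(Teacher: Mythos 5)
Your core a priori estimates coincide with the paper's own: testing with $u$ gives $a\|u\|^2+\tfrac12(D^lu(0))^2=(f,u)$, testing with $(1+x)u$ produces $\sum_{j=1}^{l}\tfrac{2j+1}{2}\|D^ju\|^2$ plus the convective contribution $-\tfrac{1}{k+2}\int_0^L u^{k+2}dx$, and your Gagliardo--Nirenberg/Young absorption with exponent $k/2l$ on $\|D^lu\|$, regular for $k<4l$ and requiring the smallness \eqref{e2.4} when $k=4l$, is exactly the paper's Estimate~II. But two steps of your scheme fail. First, in a Faedo--Galerkin framework the projected equation holds only against test functions in $\mathrm{span}\{w_1,\dots,w_N\}$; neither $(1+x)u^N$ nor $(1+x)D^{2m}u^N$ lies in that span, so none of your weighted estimates is justified at the approximate level, and since $u^N$ satisfies only the projection of \eqref{e2.1}, not the equation pointwise, you also cannot ``solve for the top derivative'' to get a uniform bound on $\|D^{2l+1}u^N\|$. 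Second, and independently of the construction: the higher multipliers $(1+x)D^{2m}u$, $m\geq 1$, generate boundary terms involving $D^{l+m}u$ and higher derivatives at $x=0$ and $x=L$, which the conditions \eqref{e2.2} (vanishing of $D^iu$, $i\leq l-1$, at both ends and of $D^lu(L)$ only) do not control and which carry no favorable sign; your hope that the boundary conditions ``were designed'' to handle these multipliers is unfounded, and this intermediate $H^{2l}$ estimate would collapse.

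The paper avoids both problems. It does not use Galerkin at all: invoking the linear solvability result of \cite{larluch} (Theorem~\ref{thm4.1}), it defines $Bu=A^{-1}(f-u^kDu)$ on $H_0^l(0,L)$, proves $B$ compact and continuous, and applies Schaefer's fixed point theorem, so the two admissible multipliers $u$ and $(1+x)u$ are applied to exact solutions of $au+\sum_{j=1}^{l}(-1)^{j+1}D^{2j+1}u+\lambda u^kDu=\lambda f$. The climb from $H_0^l$ to $H^{2l+1}$ then uses no further energy identities: one writes $(-1)^{l+1}D^{2l+1}u=\lambda f-au-\sum_{j=1}^{l-1}(-1)^{j+1}D^{2j+1}u-\lambda u^kDu$ and controls the intermediate derivatives $D^{2j+1}u$ with $l<2j+1<2l+1$ by the interpolation inequality \eqref{e3.3}, $\|D^{2j+1}u\|\leq K_1^j\|D^{2l+1}u\|^{\theta^j}\|u\|^{1-\theta^j}+K_2^j\|u\|$ with $\theta^j=\tfrac{2j+1}{2l+1}$, absorbing $l\epsilon\|D^{2l+1}u\|$ via Young with $\epsilon=\tfrac{1}{2l}$; the orders $l+1,\dots,2l$ are recovered afterwards by the same interpolation. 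If you insist on a Galerkin construction, the workable repair is to pass to the limit with only an $H_0^l$-type bound (after justifying the weighted multiplier for your basis, which is itself delicate since the boundary conditions are not symmetric), identify the limit as a distributional solution, and only then bootstrap $H^{2l+1}$ regularity from the equation as the paper does; as written, your uniform-in-$N$ $H^{2l+1}$ estimate does not go through.
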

\begin{teo}\label{thm2} Let $l,\;k \in \mathbb{N}\;\;1 \leq k\leq 4l$ and let $((1+x),f^2)$ be sufficiently small. Then the solution from Theorem \ref{thm1} is unique and continuously depends on perturbations of $f$.
\end{teo}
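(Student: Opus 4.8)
The plan is to establish uniqueness and continuous dependence simultaneously through a single energy estimate on the difference of two solutions. Suppose $u_1$ and $u_2$ are two regular solutions of \eqref{e2.1}--\eqref{e2.2} corresponding to right-hand sides $f_1$ and $f_2$, and set $w=u_1-u_2$. Subtracting the two copies of \eqref{e2.1}, the linear dispersive part $\sum_{j=1}^l(-1)^{j+1}D^{2j+1}$ acts linearly on $w$, while the nonlinear convective terms produce the difference $u_1^kDu_1-u_2^kDu_2$. The first step is to rewrite this difference so that it is manifestly controlled by $w$: using the algebraic factorization $u_1^k-u_2^k=w\sum_{m=0}^{k-1}u_1^{m}u_2^{k-1-m}$, one writes $u_1^kDu_1-u_2^kDu_2=u_1^kDw+\bigl(\sum_{m=0}^{k-1}u_1^m u_2^{k-1-m}\bigr)w\,Du_2$, so that $w$ satisfies a linear dispersive equation with variable coefficients and forcing $f_1-f_2$.

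Next I would take the $L^2$ inner product of this difference equation with $w$ and integrate by parts, exactly as in the a priori estimates that underlie Theorem \ref{thm1}. The homogeneous boundary conditions \eqref{e2.2}, which both $u_1$ and $u_2$ satisfy and hence $w$ inherits, are designed precisely so that the boundary terms arising from the odd-order derivatives either vanish or combine into a nonnegative boundary quantity; this is where the specific choice of conditions in \eqref{e2.2} pays off. The term $a(w,w)=a\|w\|^2$ is the coercive contribution. The delicate terms are the two nonlinear contributions: $(u_1^kDw,w)$, which after integration by parts becomes $-\tfrac12(D(u_1^k)w,w)=-\tfrac{k}{2}(u_1^{k-1}Du_1\,w,w)$, and the term coming from $w\,Du_2$. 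Both are estimated by Gagliardo--Nirenberg/Sobolev interpolation inequalities (the same ones invoked in Section 3 and used to prove existence), bounding them by $\|u_i\|_{H^{2l+1}}$-dependent factors times $\|w\|$ and fractional norms of $w$ interpolated between $\|w\|$ and the highest-order derivative norm controlled by the dispersive energy.

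The key mechanism is that these nonlinear terms carry a factor of $\|u_i\|_{H^{2l+1}}$, which by the a priori bounds \eqref{e2.3}/\eqref{e2.5} is itself controlled by $((1+x),f^2)^{1/2}$. Imposing that $((1+x),f^2)$ be sufficiently small forces this factor to be small enough that, after Young's inequality absorbs the top-order derivative pieces into the dispersive energy, the remaining coefficient multiplying $\|w\|^2$ is strictly dominated by the coercive constant $a$. One thereby arrives at an inequality of the form $C_1\|w\|^2\le (f_1-f_2,w)\le \tfrac{C_1}{2}\|w\|^2+C_2\|f_1-f_2\|^2$ with $C_1>0$, which yields $\|w\|\le C\|f_1-f_2\|$; taking $f_1=f_2$ gives $w\equiv 0$ and hence uniqueness, while the general inequality gives continuous dependence. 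I expect the main obstacle to be the bookkeeping in estimating the nonlinear terms for general $k$ and $l$: one must verify that the interpolation exponents genuinely close up so that the highest-order norm appears with a coefficient that can be absorbed, and that the smallness hypothesis on $((1+x),f^2)$ is exactly what is needed to make the absorbing constant beat $a$ uniformly across the admissible range $1\le k\le 4l$.
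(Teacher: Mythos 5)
Your proposal follows essentially the same route as the paper's proof: form the difference equation with the same splitting $u_1^kDu_1-u_2^kDu_2=u_1^kDw+(u_1^k-u_2^k)Du_2$ (the paper bounds $u_1^k-u_2^k$ via the Mean Value Theorem rather than your algebraic factorization, an equivalent step), multiply by $w$, integrate by parts, and use the a priori bounds of Theorem \ref{thm1} plus the smallness of $((1+x),f^2)$ to make the coefficient of $\|w\|^2$ strictly positive, which gives uniqueness and, with forcing $f_1-f_2$, continuous dependence in one stroke. The only slippage is cosmetic: no absorption of top-order pieces into a dispersive energy is actually needed, since after integration by parts no derivatives of $w$ survive in the nonlinear terms (the $w$-multiplier yields only the sign-definite boundary term $\tfrac{1}{2}(D^lw(0))^2$), and your uniform use of the $H^{2l+1}$ bound on $u_i$ even sidesteps the paper's case split between $l\geq 2$ and $l=1$, where the latter requires the estimate \eqref{e3.3} on $\|D^3u_i\|$ to control $\sup_{x\in(0,L)}|Du_i(x)|$.
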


\section{Preliminary Results}\label{sec3}
\begin{lem} \label{lem}
	For all $u\in H^1(0,L)$ such that $u(x_0)=0$ for some $x_0\in [0,L]$
	\begin{equation}\label{e3.1}
		\sup_{x\in(0,L)}|u(x)|\leq \sqrt{2}\|u\|^{\frac{1}{2}}\|Du\|^{\frac{1}{2}}.	
	\end{equation}
\end{lem}
\begin{proof}
	Let  $x_0\in [0,L]$ be such that $u(x_0)=0$. Then for any $x\in (0,L)$
	$$u^2(x)=\int_{x_0}^{x}D[u^2(\xi)]d\xi\leq 2\int_{x_0}^{x}|u(\xi)||D(\xi)|d\xi\leq 2\int_0^L |u(x)||Du(x)|dx$$$$\leq 2\|u\|\|Du\|.$$
	From this, \eqref{e3.1} follows immediately.
\end{proof}
We will use the following versions of the Gagliardo-Nirenberg's inequality, \cite{lady2, nirenberg, niren}.
\begin{teo}\label{thm3.2}
	Let $u$ belong to $H_0^l(0,L)$, then the following inequality holds:
	\begin{equation}\label{e3.2}
		\|u\|_{\infty}\leq C_*\|D^lu\|^{\frac{1}{2l}}\|u\|^{1-\frac{1}{2l}}
	\end{equation}
	with $C_*$ an absolute constant.
\end{teo}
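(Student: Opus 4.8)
The plan is to reduce the general inequality to the elementary case $l=1$, already furnished by Lemma \ref{lem}, combined with an $L^2$-interpolation estimate for the first derivative. Since $u\in H_0^l(0,L)$ vanishes at the endpoints (indeed $D^iu(0)=D^iu(L)=0$ for $0\le i\le l-1$), Lemma \ref{lem} applies with $x_0=0$ and gives the starting bound $\|u\|_\infty^2\le 2\|u\|\,\|Du\|$. It therefore suffices to control $\|Du\|$ in terms of $\|u\|$ and $\|D^lu\|$, for which I would prove a discrete log-convexity property of the sequence $\|D^ju\|$.

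For $1\le j\le l-1$, integration by parts yields
\begin{equation*}
\|D^ju\|^2=\bigl[(D^{j-1}u)(D^ju)\bigr]_0^L-\int_0^L (D^{j-1}u)(D^{j+1}u)\,dx,
\end{equation*}
and the boundary term vanishes because $D^{j-1}u(0)=D^{j-1}u(L)=0$ for $j-1\le l-1$. By the Cauchy--Schwarz inequality this gives $\|D^ju\|^2\le\|D^{j-1}u\|\,\|D^{j+1}u\|$. To make the integration by parts rigorous I would first establish the identity for $u\in C_0^\infty(0,L)$ and then pass to the limit by density of $C_0^\infty(0,L)$ in $H_0^l(0,L)$, every norm involved being continuous with respect to the $H^l$-norm.

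Next I would deduce the interpolation estimate from log-convexity. Setting $a_j=\log\|D^ju\|$, the inequality $\|D^ju\|^2\le\|D^{j-1}u\|\,\|D^{j+1}u\|$ reads $2a_j\le a_{j-1}+a_{j+1}$, so $(a_j)_{0\le j\le l}$ is a convex sequence and hence lies below its endpoint chord, $a_j\le\frac{l-j}{l}a_0+\frac{j}{l}a_l$. Exponentiating gives
\begin{equation*}
\|D^ju\|\le\|u\|^{1-\frac{j}{l}}\,\|D^lu\|^{\frac{j}{l}},\qquad 0\le j\le l,
\end{equation*}
and in particular $\|Du\|\le\|u\|^{1-\frac{1}{l}}\,\|D^lu\|^{\frac{1}{l}}$ for $j=1$ (trivial when $u\equiv0$).

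Substituting this into the starting bound yields $\|u\|_\infty^2\le 2\|u\|^{2-\frac{1}{l}}\,\|D^lu\|^{\frac{1}{l}}$, and taking square roots gives \eqref{e3.2} with the absolute constant $C_*=\sqrt{2}$. I expect the only delicate point to be the justification of the vanishing boundary terms together with the accompanying density argument; once the log-convexity inequalities are secured, the convexity-of-the-sequence step and the final substitution are elementary.
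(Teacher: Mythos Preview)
Your argument is correct. The paper itself does not supply a proof of this theorem; it merely quotes it as a special case of the classical Gagliardo--Nirenberg inequality, with references to Ladyzhenskaya--Solonnikov--Uraltseva and Nirenberg. So there is no ``paper's proof'' to compare against, only the cited literature.

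What your route buys is a self-contained and elementary derivation that avoids invoking the general Gagliardo--Nirenberg machinery: you reduce to Lemma~\ref{lem} plus the log-convexity of $j\mapsto\|D^ju\|$ obtained from a single integration by parts. As a bonus you obtain the explicit constant $C_*=\sqrt{2}$, whereas the paper treats $C_*$ as an unspecified absolute constant coming from the cited references. Two small points worth tightening: (i) the degenerate case where some intermediate $\|D^ju\|$ vanishes is not just ``$u\equiv0$'' by inspection---it follows because $D^ju\equiv0$ together with $D^iu(0)=0$ for $i\le l-1$ forces $u\equiv0$, so the logarithmic argument is safe; (ii) the density step is unnecessary here, since for $u\in H_0^l(0,L)$ and $1\le j\le l-1$ one has $D^{j-1}u\in H^1$ and $D^ju\in H^1$, so the integration by parts and the vanishing of the boundary term $D^{j-1}u(0)=D^{j-1}u(L)=0$ hold directly.
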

\begin{teo}\label{thm3.3}
	Suppose $u$ and $D^{2l+1}u$ belong to $L^2(0,L)$. Then for the derivatives $D^iu$, $0\leq i<2l+1$ the following inequalities hold: 
	\begin{equation}\label{e3.3}
		\|D^iu\|_{L^p}\leq K_1\|D^{2l+1}u\|^{\theta}\|u\|^{1-\theta}+K_2\|u\|,
	\end{equation}
	where
	$$
	\frac{1}{p}=i-\theta(2l+1)+\frac{1}{2},
	$$
	for all $\theta \in [\frac{i}{2l+1}, 1] $. (The constants $K_1$, $K_2$ depend only on $L$, $l$, $i$).
\end{teo}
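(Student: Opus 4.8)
The plan is to recover the stated family of inequalities as the one-dimensional, $L^2$-based specialization of the Gagliardo--Nirenberg interpolation inequality: first I would control all intermediate derivatives in $L^2$, and then upgrade the target norm from $L^2$ to $L^p$. Throughout, the additive summand $K_2\|u\|$ and the dependence of $K_1,K_2$ on $L$, $l$, $i$ will arise precisely from the boundary contributions that are absent from the scale-invariant estimate on the whole line.

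\textbf{Step 1 (single derivative).} I would first establish the elementary inequality
\begin{equation*}
\|Dv\|\leq C\|v\|^{\frac12}\|D^2v\|^{\frac12}+C\|v\|,\qquad v\in H^2(0,L),
\end{equation*}
with $C=C(L)$. This follows by writing $\|Dv\|^2=-(v,D^2v)+[vDv]_0^L$ and bounding the boundary term by covering $(0,L)$ with finitely many subintervals and choosing in each a point where $|Dv|$ is controlled through the mean value theorem, so that $[vDv]_0^L$ is absorbed into $C\|v\|\,\|D^2v\|+C\|v\|^2$.

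\textbf{Step 2 ($L^2$ interpolation of intermediate derivatives).} Applying Step 1 to $v=D^{j}u$, together with the convexity relation $\|D^jv\|^2\leq\|D^{j-1}v\|\,\|D^{j+1}v\|+(\text{boundary})$, I would prove by induction on the order that
\begin{equation*}
\|D^iu\|\leq K_1\|D^{2l+1}u\|^{\frac{i}{2l+1}}\|u\|^{1-\frac{i}{2l+1}}+K_2\|u\|,\qquad 0\leq i\leq 2l+1,
\end{equation*}
which is exactly the case $p=2$, $\theta=\frac{i}{2l+1}$, i.e. the lower endpoint of the $\theta$--interval.

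\textbf{Step 3 (upgrade to $L^p$).} For $2\leq p\leq\infty$ set $\sigma=\frac12-\frac1p\in[0,\frac12]$ and apply the one-dimensional inequality $\|w\|_{L^p}\leq C\|w\|^{1-\sigma}\|Dw\|^{\sigma}+C\|w\|$ to $w=D^iu$, giving $\|D^iu\|_{L^p}\leq C\|D^iu\|^{1-\sigma}\|D^{i+1}u\|^{\sigma}+C\|D^iu\|$. Substituting the Step 2 bounds for $\|D^iu\|$ and $\|D^{i+1}u\|$ and collecting the dominant power of $\|D^{2l+1}u\|$ through Young's inequality produces the exponent
\begin{equation*}
\theta=(1-\sigma)\tfrac{i}{2l+1}+\sigma\tfrac{i+1}{2l+1}=\frac{i+\sigma}{2l+1},
\end{equation*}
and one checks directly that $\theta(2l+1)=i+\sigma$ is equivalent to the scaling identity $\frac1p=i-\theta(2l+1)+\frac12$. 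Letting $\sigma$ range over $[0,\frac12]$ sweeps $\theta$ over $[\frac{i}{2l+1},\frac{2i+1}{2(2l+1)}]$ and $p$ over $[2,\infty]$; the remaining range $\theta\in(\frac{2i+1}{2(2l+1)},1]$, where $\frac1p\leq0$ is read as a H\"older exponent, is reached by applying the embedding with more derivatives, $\|D^iu\|_{L^p}\leq C\|D^iu\|^{1-\sigma}\|D^{i+m}u\|^{\sigma}+C\|D^iu\|$, and interpolating as before.

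The genuinely delicate point is Step 1: on a bounded interval the estimate is not scale invariant, so the boundary term $[vDv]_0^L$ cannot simply be discarded as it is on $\mathbb{R}$. Controlling it uniformly --- equivalently, producing the correct additive lower-order term with constants depending only on $L$, $l$, $i$ --- is what forces the covering/mean-value argument, and it is this mechanism, propagated through the induction of Step 2, that ultimately fixes the form of $K_1$ and $K_2$.
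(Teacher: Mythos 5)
The first thing to note is that the paper contains no proof of Theorem \ref{thm3.3} for you to diverge from: it is imported verbatim as a known form of the Gagliardo--Nirenberg inequality, with the references \cite{lady2,nirenberg,niren} standing in for the argument. Your reconstruction is essentially the classical proof of that quoted result, and its core is sound: the two-term inequality of Step 1, with the covering/mean-value device to handle the loss of scale invariance on a bounded interval, is the standard mechanism and correctly explains why the additive term $K_2\|u\|$ and the dependence of the constants on $L$, $l$, $i$ appear; the induction of Step 2 yields the $p=2$ endpoint $\theta=\frac{i}{2l+1}$; and in Step 3 the base inequality $\|w\|_{L^p}\leq C\|w\|^{\frac12+\frac1p}\|Dw\|^{\frac12-\frac1p}+C\|w\|$ combined with the Step 2 bounds gives exactly $\theta=\frac{i+\sigma}{2l+1}$, the bookkeeping $\theta(2l+1)=i+\sigma$ matching the stated relation $\frac1p=i-\theta(2l+1)+\frac12$; the cross terms are indeed disposable by Young, since $X^aY^{1-a}\leq X^{\theta}Y^{1-\theta}+Y$ whenever $0\leq a\leq\theta$. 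One small point worth making explicit: the hypotheses only assume $u,D^{2l+1}u\in L^2(0,L)$, but in one dimension a distribution with $L^2$ derivative is absolutely continuous, so $u\in H^{2l+1}(0,L)$ automatically and the integrations by parts in Steps 1--2 are legitimate.

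The one genuinely soft spot is your treatment of the upper range $\theta\in\bigl(\frac{2i+1}{2(2l+1)},1\bigr]$, where $\frac1p\leq 0$. There the statement only makes sense in Nirenberg's extended convention, in which $\|D^iu\|_{L^p}$ with negative $\frac1p$ denotes a H\"older (semi)norm; moreover, for $\theta$ near $1$ with $i<2l$ the nominal exponent $-\frac1p=2l+\frac12-i$ exceeds $1$, so one needs the iterated convention (H\"older norms of the higher derivatives $D^{i+r}u$), which your closing sentence about ``applying the embedding with more derivatives'' gestures at but does not pin down. Since here the base norms are both $L^2$, the exceptional case of Nirenberg's theorem (which would require $2l+1-i-\frac12$ to be a nonnegative integer) never occurs, so the full closed interval of $\theta$ is in fact attainable --- but that endpoint analysis is exactly what the cited extended interpolation inequality \cite{nirenberg} supplies. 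For every application the paper actually makes of \eqref{e3.3} --- $p=2$ with $\theta=\frac{i}{2l+1}$ in \eqref{e5.15} and \eqref{e5.19}, and $p=\infty$ with $\theta=\frac{2i+1}{2(2l+1)}$ in the bounds on $\sup_{x\in(0,L)}|Du_i(x)|$ --- your Steps 1--3 constitute a complete and self-contained proof.
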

We will use  the following  fixed point theorem, \cite{evans}.
\begin{teo}\label{thm3.4}(Schaefer's Fixed Point Theorem)
	Let $X$ a real Banach Space. Suppose $B:X\rightarrow X$ is a compact and continuous  mapping. Assume further that the set 
	$$\{u\in X\,\,|\,\,u=\lambda Bu \,\,\mbox{for some}\,\, 0\leq \lambda\leq 1\}$$
	is bounded. Then $B$ has a fixed point.
\end{teo}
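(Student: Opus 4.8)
The plan is to reduce Schaefer's theorem to Schauder's Fixed Point Theorem (every continuous self-map of a nonempty, closed, bounded, convex subset of a Banach space with relatively compact image has a fixed point) by confining the dynamics to a large closed ball and composing $B$ with a radial retraction onto that ball.

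First I would use the hypothesis that the set $S=\{u\in X : u=\lambda Bu \text{ for some } 0\le \lambda\le 1\}$ is bounded to fix a radius $M>0$ with $\|u\|<M$ for every $u\in S$. Note that $0\in S$ (take $\lambda=0$), so $S$ is nonempty and such an $M$ exists. Let $\overline{B_M}=\{u\in X:\|u\|\le M\}$ denote the closed ball, which is nonempty, closed, bounded and convex. I would then introduce the radial retraction $r:X\to\overline{B_M}$ defined by $r(v)=v$ when $\|v\|\le M$ and $r(v)=Mv/\|v\|$ when $\|v\|>M$; this map is continuous on $X$ and satisfies $\|r(v)\|\le M$ for all $v$. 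Finally set $\tilde B=r\circ B:\overline{B_M}\to\overline{B_M}$.

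Next I would verify the hypotheses of Schauder's theorem for $\tilde B$. Continuity of $\tilde B$ is immediate from the continuity of $B$ and of $r$. For compactness, $B$ carries bounded sets to relatively compact sets, so $B(\overline{B_M})$ is relatively compact; since $\tilde B(\overline{B_M})=r\bigl(B(\overline{B_M})\bigr)$ is the continuous image of a relatively compact set, it too is relatively compact. Schauder's Fixed Point Theorem then yields a point $u^*\in\overline{B_M}$ with $\tilde B(u^*)=u^*$, that is, $r(Bu^*)=u^*$.

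The last step is to promote $u^*$ to a genuine fixed point of $B$ by a case analysis on $\|Bu^*\|$. If $\|Bu^*\|\le M$, then $r(Bu^*)=Bu^*$, so $u^*=Bu^*$ and we are done. The remaining case $\|Bu^*\|>M$ is exactly where the a priori bound is used: here $u^*=r(Bu^*)=\lambda Bu^*$ with $\lambda=M/\|Bu^*\|\in(0,1)$, so $u^*\in S$ and hence $\|u^*\|<M$; on the other hand $\|u^*\|=\|r(Bu^*)\|=M$, a contradiction. Thus only the first case can occur, and $B$ has a fixed point. The main (and essentially only) subtlety lies in constructing the retraction $r$ and checking that composing with it preserves both continuity and compactness; once Schauder's theorem is in hand, the contradiction in the second case is forced precisely by the boundedness assumption in the statement.
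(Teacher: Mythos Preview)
Your argument is correct and is essentially the standard proof of Schaefer's theorem via Schauder's theorem and a radial retraction. Note, however, that the paper does not supply its own proof of this statement: Theorem~\ref{thm3.4} is quoted as a known result with a reference to \cite{evans}, so there is no in-paper proof to compare against. Your write-up matches the classical treatment found in that reference.
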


\section{Existence}\label{sec4}
\begin{proof}(of Theorem \ref{thm1}).
	
	We start with the linearized version of \eqref{e2.1}
	\begin{equation}\label{e4.1}
		Au\equiv au+\sum_{j=1}^{l}(-1)^{j+1}D^{2j+1}u=f\quad \text{in } (0,L)
	\end{equation}
	subject to boundary conditions \eqref{e2.2}.
	\begin{teo}\label{thm4.1}(See \cite{larluch}, Theorem 5). Let $f\in L^2(0,L)$. Then the problem \eqref{e4.1},\eqref{e2.2} admits a unique regular solution $u\in H^{2l+1}(0,L)$ such that
		\begin{equation}\label{e4.2}
			\|u\|_{H^{2l+1}}\leq C_0 \|f\|
		\end{equation}
		with the constant $C_0$ depending only on $L$ and $a$.
	\end{teo}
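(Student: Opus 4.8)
The plan is to derive the result from a single energy estimate that simultaneously yields uniqueness, the $L^2$ bound, and (after a bootstrap) the full $H^{2l+1}$ estimate, and to obtain existence through an approximation scheme whose uniform bounds come from the same estimate. Because the problem is linear, uniqueness reduces to showing that $Aw=0$ together with the homogeneous version of \eqref{e2.2} forces $w\equiv 0$, so everything hinges on the coercivity hidden in the skew-symmetric part of $A$.

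First I would establish the basic identity by taking the $L^2$ inner product of \eqref{e4.1} with $u$. The term $au$ produces $a\|u\|^2$, and the crucial point is the evaluation of $\sum_{j=1}^l(-1)^{j+1}(D^{2j+1}u,u)$. Integrating each term by parts repeatedly, every summand collapses to boundary contributions of the form $[D^m u\, D^{2j-m}u]_0^L$ and a final term $\tfrac{(-1)^j}{2}[(D^j u)^2]_0^L$. Invoking the boundary conditions \eqref{e2.2}, namely $D^i u(0)=D^i u(L)=0$ for $i=0,\dots,l-1$ together with the extra condition $D^l u(L)=0$, all of these vanish except the one coming from $j=l$, which leaves
\[
a\|u\|^2+\tfrac12\,(D^l u(0))^2=(f,u).
\]
Both terms on the left are nonnegative, so $a\|u\|^2\le\|f\|\,\|u\|$, giving $\|u\|\le\|f\|/a$ and a trace bound on $D^l u(0)$. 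Applying the same identity to $w=u_1-u_2$ with $f=0$ yields $w\equiv0$, hence uniqueness. The design of the boundary conditions, in particular the asymmetric extra condition at $x=L$, is exactly what makes this cancellation produce a favorable sign.

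Next I would bootstrap to $H^{2l+1}$. Solving \eqref{e4.1} for the top-order term gives $\|D^{2l+1}u\|\le\|f\|+a\|u\|+\sum_{j=1}^{l-1}\|D^{2j+1}u\|$. Each intermediate norm $\|D^{2j+1}u\|$ is controlled through Theorem \ref{thm3.3} with $p=2$ and $\theta=\tfrac{2j+1}{2l+1}<1$, so that $\|D^{2j+1}u\|\le K_1\|D^{2l+1}u\|^\theta\|u\|^{1-\theta}+K_2\|u\|$; since $\theta<1$, Young's inequality converts the first term into $\varepsilon\|D^{2l+1}u\|+C_\varepsilon\|u\|$. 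Choosing $\varepsilon$ small enough to absorb the finitely many intermediate contributions into the left-hand side and using the bound $\|u\|\le\|f\|/a$, I obtain $\|D^{2l+1}u\|\le C_0\|f\|$ and then, by interpolating the lower derivatives once more, the desired $\|u\|_{H^{2l+1}}\le C_0\|f\|$ with $C_0=C_0(L,a)$.

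For existence I would realize $u$ as the limit of an approximation scheme whose a priori bounds are precisely those above; the cleanest route is a Galerkin method in a basis adapted to \eqref{e2.2}, or an elliptic regularization that adds a small higher even-order term to turn $A$ into a coercive operator solvable by Lax--Milgram, followed by passage to the limit. The main obstacle lies here rather than in the estimates: the boundary conditions are not symmetric (there are $l$ conditions at $0$ but $l+1$ at $L$), so constructing a suitable approximating space and verifying that the limit recovers exactly the boundary data of \eqref{e2.2} requires care. Once the uniform-in-parameter bounds from the energy identity and the bootstrap are in hand, weak and strong compactness let me pass to the limit and identify it as the unique regular solution satisfying \eqref{e4.2}.
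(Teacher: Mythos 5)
The paper itself contains no proof of Theorem \ref{thm4.1}: it is quoted from \cite{larluch} (Theorem 5), so your attempt can only be compared with the techniques visible in the present article. On that score your a priori analysis is correct and coincides with what the authors do for the nonlinear problem: your energy identity $a\|u\|^2+\frac{1}{2}(D^lu(0))^2=(f,u)$ is exactly the computation of Estimate I (cf. \eqref{e5.8}--\eqref{e5.9}, with the term $\lambda u^kDu$ removed), including the verification that the conditions \eqref{e2.2} annihilate every boundary term except the favorable $(D^lu(0))^2$ at $x=0$; and your bootstrap --- solving \eqref{e4.1} for $(-1)^{l+1}D^{2l+1}u$, interpolating the intermediate norms $\|D^{2j+1}u\|$ with $\theta^j=\frac{2j+1}{2l+1}$ via Theorem \ref{thm3.3}, absorbing by Young's inequality, then recovering $\|D^iu\|$ for $l<i\leq 2l$ --- reproduces the chain \eqref{e1}, \eqref{e5.14}--\eqref{e5.19} of Section \ref{sec4}. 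Uniqueness from the identity is immediate and correct.

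The genuine gap is existence, and you only half-acknowledge it. Both devices you propose stall at the same point: the form $(Au,u)$ controls only $\|u\|^2$ plus the trace $(D^lu(0))^2$, so a Galerkin scheme in a basis satisfying \eqref{e2.2} yields solvable discrete systems but only an $L^2$-uniform bound; your higher-order bounds presuppose that the approximation solves the differential equation pointwise, and the $(1+x)$-multiplier trick of Section \ref{sec4} is unavailable because $(1+x)u_N$ does not lie in the finite-dimensional test space, so you have no compactness with which to identify the limit of $Au_N$. Likewise, regularization by a coercive term of order $2l+2$ forces an extra boundary condition whose removal needs $\epsilon$-uniform estimates that again exceed the $L^2$ level. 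What rescues the argument in one dimension, and what you do not mention, is that \eqref{e4.1},\eqref{e2.2} is a constant-coefficient two-point boundary value problem of order $2l+1$ with exactly $2l+1$ boundary conditions: by variation of constants, $u=u_p+\sum_{i=1}^{2l+1}c_i\phi_i$ with $u_p\in H^{2l+1}(0,L)$ and $\phi_i$ smooth solutions of the homogeneous equation, and \eqref{e2.2} becomes a linear algebraic system for the $c_i$ whose matrix is invertible precisely because your energy identity gives uniqueness for the homogeneous problem (the Fredholm alternative for two-point problems). This yields existence for every $f\in L^2(0,L)$, after which your a priori estimates, now applied to the actual solution, give \eqref{e4.2} with $C_0=C_0(L,a)$ for fixed $l$. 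With that replacement for the final step, your argument becomes a complete and self-contained proof of the quoted theorem.
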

	Given $u\in H_0^l(0,L)$, set $F:=f-u^kDu$. By \eqref{e3.2}, we get
	\begin{eqnarray}\label{e5.1}
		\|F\|& \leq & \|f\|+\|u^kDu\|\leq\|f\|+\|u\|_{\infty}^k\|Du\|\notag\\ & \leq & \|f\|+C_*^k\|u\|^{\left(1-\frac{1}{2l}\right)k}\|D^lu\|^{\frac{k}{2l}}\|Du\|\notag\\ & \leq & \|f\|+C_*^k\|u\|_{H_0^l}^{\left(1-\frac{1}{2l}\right)k}\|u\|_{H_0^l}^{\frac{k}{2l}}\|u\|_{H_0^l}\notag\\
		& \leq & \|f\|+C_*^k\|u\|_{H_0^l}^{k+1}.
	\end{eqnarray}
	By Theorem \ref{thm4.1}, let $w\in H^{2l+1}(0,L)$ be a unique solution of the linear equation 
	\begin{equation}\label{e5.2}
		aw+\sum_{j=1}^{l}(-1)^{j+1}D^{2j+1}w=F\quad \text{in } (0,L)
	\end{equation}
	subject to boundary conditions \eqref{e2.2}.
	By \eqref{e4.2}-\eqref{e5.1}, 
	\begin{equation}\label{e5.3}
		\|w\|_{H^{2l+1}}\leq C_0\|F\|\leq C_0(\|f\|+C_*^k\|u\|_{H_0^l}^{k+1}).
	\end{equation}
	We will write henceforth $Bu=w$ whenever $w$ is derived from $u$ via \eqref{e5.2},\eqref{e2.2}, that is, $Bu\equiv A^{-1}(F(u))$, where $A$ is defined by \eqref{e4.1}.
	\begin{lem}\label{lem1}
		The mapping $B:H_0^l(0,L)\rightarrow H_0^l(0,L)$ is compact and continuous.
	\end{lem}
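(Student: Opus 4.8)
The goal is to show that the map $B:H_0^l(0,L)\to H_0^l(0,L)$ defined by $Bu=A^{-1}(f-u^kDu)$ is compact and continuous. I would split this into two tasks: \emph{continuity} of $B$ and \emph{compactness} of $B$. The key structural fact I would exploit throughout is the estimate \eqref{e5.3}, which already tells us that $B$ maps bounded sets of $H_0^l$ into bounded sets of $H^{2l+1}$, and the fact that the linear solution operator $A^{-1}$ is bounded from $L^2$ into $H^{2l+1}$ by Theorem \ref{thm4.1}.

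For \textbf{compactness}, the plan is to combine the smoothing estimate \eqref{e5.3} with the compact Sobolev embedding. Given a bounded sequence $\{u_m\}$ in $H_0^l(0,L)$, estimate \eqref{e5.3} shows that $\{Bu_m\}$ is bounded in $H^{2l+1}(0,L)$. Since the embedding $H^{2l+1}(0,L)\hookrightarrow H^l(0,L)$ is compact on the bounded interval $(0,L)$ (Rellich--Kondrachov), one can extract a subsequence of $\{Bu_m\}$ converging in $H^l(0,L)$; because each $Bu_m$ satisfies the boundary conditions \eqref{e2.2}, the limit lies in $H_0^l(0,L)$, so $B$ maps bounded sets into relatively compact sets. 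This step is essentially routine once \eqref{e5.3} is in hand.

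For \textbf{continuity}, I would take $u_m\to u$ in $H_0^l(0,L)$ and show $Bu_m\to Bu$ in $H_0^l(0,L)$. By linearity of $A^{-1}$ and the bound \eqref{e4.2}, it suffices to prove that the nonlinear term converges, i.e. $F(u_m)=f-u_m^kDu_m\to f-u^kDu=F(u)$ in $L^2(0,L)$; indeed $\|Bu_m-Bu\|_{H^{2l+1}}=\|A^{-1}(F(u_m)-F(u))\|_{H^{2l+1}}\leq C_0\|F(u_m)-F(u)\|$. To control $\|u_m^kDu_m-u^kDu\|$ I would write the difference as $(u_m^k-u^k)Du_m+u^k(Du_m-Du)$, and estimate each piece in $L^2$ using the uniform bound $\|u_m\|_\infty,\|u\|_\infty\leq C_*\|u_m\|_{H_0^l}^{1-\frac{1}{2l}}\|D^l u_m\|^{\frac{1}{2l}}\leq C$ coming from \eqref{e3.2}, together with the algebraic factorization $u_m^k-u^k=(u_m-u)\sum_{j=0}^{k-1}u_m^{j}u^{k-1-j}$. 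Convergence $u_m\to u$ in $H_0^l$ yields both $\|u_m-u\|_\infty\to 0$ (again by \eqref{e3.2}) and $\|Du_m-Du\|\to 0$, which drives both pieces to zero.

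I expect the \textbf{continuity} argument to be the main obstacle, since it requires a careful estimate of the nonlinear convective term $u^kDu$ in $L^2$ rather than merely a bound on its size. The delicate point is to interpolate the factor $Du_m$ (or $Du$) and the product $u_m^k-u^k$ so that only the $H_0^l$ norms of the differences appear, controlled uniformly by the convergent sequence; the uniform $L^\infty$ bound supplied by the Gagliardo--Nirenberg inequality \eqref{e3.2} is exactly what makes this possible on the bounded interval $(0,L)$. Once the map $F:H_0^l\to L^2$ is shown to be continuous, composition with the bounded linear operator $A^{-1}$ and the compact embedding finishes both assertions.
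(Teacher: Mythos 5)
Your proof is correct, and the compactness half coincides with the paper's: both apply \eqref{e5.3} to get boundedness of $\{Bu_m\}$ in $H^{2l+1}(0,L)$ and then extract a subsequence convergent in the $H^l$ norm via the compact embedding (your remark that the limit stays in the closed subspace $H_0^l(0,L)$ because each $Bu_m$ satisfies \eqref{e2.2} is in fact slightly more careful than the paper's phrasing). The continuity half, however, follows a genuinely different route. The paper never invokes linearity and boundedness of the solution operator; instead it writes down the equation \eqref{e5.4} satisfied by the difference $v_n=Bu_n-Bu$ and re-derives energy estimates by hand: multiplying first by $v_n$ to obtain $a\|v_n\|\leq \|u^kD(u-u_n)\|+\|(u^k-u_n^k)Du_n\|$, and then by the weighted multiplier $(1+x)v_n$ to recover $\sum_{j=1}^{l}\left(\frac{2j+1}{2}\right)\|D^jv_n\|^2$ and conclude $\|v_n\|_{H_0^l}\to 0$. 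You shortcut this by observing $Bu_n-Bu=A^{-1}\left(F(u_n)-F(u)\right)$ and quoting the a priori bound \eqref{e4.2} of Theorem \ref{thm4.1}, which reduces everything to continuity of $F:H_0^l\to L^2$ and in fact yields the stronger conclusion $Bu_n\to Bu$ in $H^{2l+1}(0,L)$. Both routes hinge on exactly the same nonlinear estimate: the decomposition of $u_n^kDu_n-u^kDu$ into the two pieces above, an $L^\infty$ bound by interpolation (the paper uses \eqref{e3.1}, you use \eqref{e3.2}; both apply since the functions lie in $H_0^l$), and an algebraic factorization of $u_n^k-u^k$ (your telescoping sum versus the paper's Mean Value Theorem argument, equivalent in effect). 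Your route buys brevity and a stronger mode of convergence at no cost; the paper's buys having the weighted-multiplier computation on display, which is then reused verbatim in the a priori estimates of Lemma \ref{lem2}. One cosmetic slip: in your uniform $L^\infty$ bound you wrote $\|u_m\|_{H_0^l}^{1-\frac{1}{2l}}$ where \eqref{e3.2} has the $L^2$ norm $\|u_m\|^{1-\frac{1}{2l}}$; since $\|u_m\|\leq\|u_m\|_{H_0^l}$ this only loosens the bound and is harmless.
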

	\begin{proof}
		Indeed, if $\{u_n\}$ is a bounded sequence in $H_0^l(0,L)$, then in view of estimate \eqref{e5.3}, the sequence $\{w_n\}$, where $w_n=Bu_n$, $n\in\mathbb{N}$ is bounded in $H^{2l+1}(0,L)$. Since $H^{2l+1}(0,L)$ is compactly embedded in $H_0^l(0,L)$, there exists a convergent in $H_0^l(0,L)$ subsequence $\{Bu_{n_m}\}_{m=1}^{\infty}$, therefore $B$ is compact.

		To prove continuity of the mapping $B$, let $\{u_n\}$ be a sequence such that $u_n \rightarrow u$ in $H_0^l(0,L)$. Then the difference $v_n=w_n-w$, where $w_n=Bu_n$, $n\in\mathbb{N}$ and $w=Bu$ satisfies
		\begin{equation}\label{e5.4}
			av_n+\sum_{j=1}^{l}(-1)^{j+1}D^{2j+1}v_n=u^kD(u-u_n)+(u^k-u_n^k)Du_n
		\end{equation}
		and the boundary conditions \eqref{e2.2}.
		
		Multiplying \eqref{e5.4} by $v_n$ and integrating by parts over $(0,L)$, we obtain
		$$a\|v_n\|^2+\frac{1}{2}(D^lv_n(0))^2=(u^kD(u-u_n)+(u^k-u_n^k)Du_n,v_n),$$
		whence
		\begin{equation}\label{e5.5}
			a\|v_n\|\leq \|u^kD(u-u_n)\|+\|(u^k-u_n^k)Du_n\|.
		\end{equation}
		According to \eqref{e3.1},
		\begin{eqnarray*}\|u^kD(u-u_n)\| & \leq & \left(\sup_{x\in (0,L)}|u(x)|^{2k}\right)^{\frac{1}{2}}\|D(u_n-u)\| \\ & \leq & 2^{\frac{k}{2}}\|u\|^{\frac{k}{2}}\|Du\|^{\frac{k}{2}}\|u_n-u\|_{H_0^l}\\ & \leq & 2^{\frac{k}{2}}\|u\|_{H_0^l}^k \|u_n-u\|_{H_0^l} \rightarrow 0
		\end{eqnarray*}
		because $u_n \rightarrow u$ in $H_0^l(0,L)$.
		On the other hand, let $g\in C^1(\mathbb{R})$ be such that $g(y)=y^k$. By the Mean Value Theorem, for arbitrary $y,z\in\mathbb{R}$ there is $\xi\in (y,z)$ such that
		$$|y^k-z^k|=k\xi^{k-1}|y-z|.$$
		Since $\xi\in (y,z)$ we can write $\xi=(1-\tau)y+\tau z$, with $\tau\in(0,1)$. Taking $y=u_n(x)$ and $z=u(x)$ for each $x\in(0,L)$, we obtain
		\begin{eqnarray}\label{e5.6}
			|u_n^k(x)-u^k(x)|^2 & = & k^2|(1-\tau)u_n(x)+\tau u(x)|^{2(k-1)}|u_n(x)-u(x)|^2 \notag\\
			& \leq & k^2[|1-\tau||u_n(x)|+|\tau| |u(x)|]^{2(k-1)}|u_n(x)-u(x)|^2 \notag\\
			& \leq & k^2[|u_n(x)|+|u(x)|]^{2(k-1)}|u_n(x)-u(x)|^2 \notag\\
			& \leq & k^22^{2(k-1)}|u_n(x)|^{2(k-1)}|u_n(x)-u(x)|^2\notag\\
			& + & 	k^22^{2(k-1)}|u(x)|^{2(k-1)}|u_n(x)-u(x)|^2.
		\end{eqnarray}
		By \eqref{e3.1},
		$$\sup_{x\in (0,L)}|u_n(x)|^{2(k-1)}\leq 2^{k-1}\|u_n\|^{k-1}\|Du_n\|^{k-1}\leq 2^{k-1}\|u_n\|_{H_0^l}^{2(k-1)},$$
		$$\sup_{x\in (0,L)}|u(x)|^{2(k-1)}\leq 2^{k-1}\|u\|^{k-1}\|Du\|^{k-1}\leq 2^{k-1}\|u\|_{H_0^l}^{2(k-1)}$$
		and
		$$\sup_{x\in (0,L)}|u_n(x)-u(x)|^2\leq 2\|u_n-u\|\|D(u_n-u)\|\leq 2\|u_n-u\|_{H_0^l}^2.$$
		Thus
		\begin{eqnarray*}
			\|(u^k-u_n^k)Du_n\| & \leq & \left(\sup_{x\in (0,L)}|u_n^k(x)-u^k(x)|^2\right)^{\frac{1}{2}}\|Du_n\| \\
			& \leq & k2^{\frac{3k-2}{2}}(\|u_n\|_{H_0^l}^{k-1}+\|u\|_{H_0^l}^{k-1})^{\frac{1}{2}}\|u_n-u\|_{H_0^l} \rightarrow 0	
		\end{eqnarray*}
		because the sequence $\{u_n\}$ is bounded in $H_0^l(0,L)$ and  $u_n \rightarrow u$ in $H_0^l(0,L)$. From \eqref{e5.5}, we conclude that $\|v_n\|\rightarrow 0$.

		Multiplying \eqref{e5.4} by $(1+x)v_n$ and integrating over $(0,L)$, we obtain
		\begin{eqnarray*}
			a(v_n,(1+x)v_n)+\sum_{j=1}^{l}(-1)^{j+1}(D^{2j+1}v_n,(1+x)v_n) &\\=(u^kD(u-u_n)+(u^k-u_n^k)Du_n,(1+x)v_n).
		\end{eqnarray*}
		Integrating by parts and using \eqref{e2.2} it follow that
		\begin{eqnarray*}
			a\|v_n\|^2+\sum_{j=1}^{l}\left(\frac{2j+1}{2}\right)\|D^jv_n\|^2+\frac{1}{2}(D^lv_n(0))^2 &\\
			\leq (\|u^kD(u-u_n)\|+\|(u^k-u_n^k)Du_n\|)\|(1+x)v_n\|.
		\end{eqnarray*}
		Since $\|u^kD(u-u_n)\|, \|(u^k-u_n^k)Du_n\|, \|v_n\|\rightarrow 0$, we get $\|v_n\|_{H_0^l}\rightarrow 0$, that is, $w_n \rightarrow w$ in $H_0^l(0,L)$. Hence, $u_n \rightarrow u$ in $H_0^l(0,L)$ implies $Bu_n\rightarrow Bu$ in $H_0^l(0,L)$. This proves that $B$ is continuous. 
	\end{proof}
	\begin{lem}\label{lem2}
		The set 
		$$\{u\in H_0^l(0,L)\,\,|\,\,u=\lambda Bu \,\,\mbox{for some}\,\, 0\leq \lambda\leq 1\}$$
		is bounded in $H_0^l(0,L)\cap H^{2l+1}(0,L)$.
	\end{lem}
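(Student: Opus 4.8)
The plan is to verify the hypotheses of Schaefer's theorem (Theorem \ref{thm3.4}) by deriving a priori bounds, uniform in $\lambda\in[0,1]$, for any $u$ with $u=\lambda Bu$. Such a $u$ solves
\[
au+\sum_{j=1}^{l}(-1)^{j+1}D^{2j+1}u+\lambda u^kDu=\lambda f\quad\text{in }(0,L)
\]
with the boundary conditions \eqref{e2.2} (for $\lambda=0$ we have $u=0$ and nothing to prove). I would first test this equation with $u$ itself. Integrating by parts and using \eqref{e2.2}, the convective term drops out since $(u^kDu,u)=\frac{1}{k+2}\int_0^L D(u^{k+2})\,dx=0$, while the dispersive sum leaves only the nonnegative boundary contribution $\frac12(D^lu(0))^2$. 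Hence $a\|u\|^2\le\lambda(f,u)\le\|f\|\,\|u\|$, which yields the uniform bound $\|u\|\le\|f\|/a\le((1+x),f^2)^{1/2}/a=:M$. This \emph{decoupled} $L^2$-bound is the crucial preliminary step.

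Next I would test the equation with $(1+x)u$. As in the continuity proof of Lemma \ref{lem1}, the dispersive terms produce $\sum_{j=1}^{l}\frac{2j+1}{2}\|D^ju\|^2+\frac12(D^lu(0))^2$, while integration by parts in the convective term (again using $u(0)=u(L)=0$) gives $\lambda(u^kDu,(1+x)u)=-\frac{\lambda}{k+2}(1,u^{k+2})$. Bounding the forcing by $\lambda(f,(1+x)u)\le\frac{a}{2}((1+x),u^2)+\frac{1}{2a}((1+x),f^2)$ and absorbing, I arrive at
\[
\frac{a}{2}((1+x),u^2)+\sum_{j=1}^{l}\frac{2j+1}{2}\|D^ju\|^2\le\frac{1}{2a}((1+x),f^2)+\frac{1}{k+2}(1,|u|^{k+2}).
\]

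The heart of the argument is the nonlinear term. By Theorem \ref{thm3.2}, $(1,|u|^{k+2})\le\|u\|_\infty^k\|u\|^2\le C_*^k\|D^lu\|^{k/(2l)}\|u\|^{k(1-1/(2l))+2}$, and freezing $\|u\|\le M$ reduces this to $C\,\|D^lu\|^{k/(2l)}$ with $C$ depending on $M$. In the regular case $k<4l$ the exponent $k/(2l)<2$, so Young's inequality absorbs it into $\frac{2l+1}{2}\|D^lu\|^2$ up to an additive constant, bounding both $((1+x),u^2)$ and $\|D^lu\|$. In the critical case $k=4l$ the exponent equals $2$ and the term is exactly $\frac{C_*^{4l}}{4l+2}\|u\|^{4l}\|D^lu\|^2$, \emph{linear} in $\|D^lu\|^2$; it can be absorbed only when $\frac{C_*^{4l}}{4l+2}\|u\|^{4l}<\frac{2l+1}{2}$, and substituting $\|u\|\le\|f\|/a$ gives precisely the threshold \eqref{e2.4}. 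I expect this dichotomy — subcritical absorption with a remainder versus critical absorption requiring smallness — to be the main obstacle; the decoupled $L^2$-bound of the first step is exactly what keeps the weighted norm from reappearing on the right with a super-unit power (a direct test with $(1+x)u$ alone would leave $((1+x),u^2)^\gamma$ with $\gamma>1$ and fail to close).

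Finally, having bounded $\|u\|$ and $\|D^lu\|$, I would control the intermediate norms $\|D^iu\|$, $1\le i\le l-1$, by Theorem \ref{thm3.3} (or interpolation), so that $\|u\|_{H_0^l}$ is bounded. Then $\|u^kDu\|\le\|u\|_\infty^k\|Du\|$ is bounded, and rewriting the equation as $Au=\lambda(f-u^kDu)$ and invoking Theorem \ref{thm4.1} yields $\|u\|_{H^{2l+1}}\le C_0(\|f\|+\|u^kDu\|)$, uniformly in $\lambda$. This establishes boundedness of the set in $H_0^l(0,L)\cap H^{2l+1}(0,L)$ and completes the verification of Schaefer's hypotheses.
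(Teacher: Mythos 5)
Your proposal is correct, and its core coincides with the paper's proof: the same two multipliers ($u$, then $(1+x)u$), the same vanishing of $(u^kDu,u)$ and the identity $\lambda(u^kDu,(1+x)u)=-\frac{\lambda}{k+2}(1,u^{k+2})$, the same Gagliardo--Nirenberg bound from Theorem \ref{thm3.2}, and the same dichotomy: Young absorption when $k<4l$ (exponent $k/(2l)<2$) versus a term linear in $\|D^lu\|^2$ when $k=4l$, absorbable exactly under the threshold \eqref{e2.4} after inserting the decoupled estimate $\|u\|\le\|f\|/a$. Where you genuinely diverge is the upgrade to $H^{2l+1}$: the paper isolates $(-1)^{l+1}D^{2l+1}u$, splits the lower-order dispersive terms into the index sets $I_1$, $I_2$, interpolates the $I_2$ terms via Theorem \ref{thm3.3} with an $\epsilon$-Young absorption (taking $\epsilon=\frac{1}{2l}$), and then runs a second round of interpolation to bound $\|D^iu\|$ for $i=l+1,\dots,2l$; you instead observe that $u$ solves the linear problem $Au=\lambda(f-u^kDu)$ and apply the linear estimate of Theorem \ref{thm4.1} (equivalently, estimate \eqref{e5.3} with $u=\lambda Bu$) in one stroke. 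Your route is shorter and perfectly valid, since $\|u^kDu\|\le C_*^k\|u\|_{H_0^l}^{k+1}$ is already controlled by the a priori bounds and $\lambda\le 1$ keeps everything uniform; the paper's hand computation pays its way later, because the explicit constants it produces (e.g., in \eqref{e5.17}--\eqref{e5.18}) are reused verbatim in the uniqueness argument, see \eqref{e6.12}. One small redundancy on your side: you invoke Theorem \ref{thm3.3} to control $\|D^iu\|$ for $1\le i\le l-1$, but your own weighted identity already yields $\sum_{j=1}^{l}\frac{2j+1}{2}\|D^ju\|^2$ on the left-hand side, so no interpolation is needed for those intermediate norms.
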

	\begin{proof} Assume $u\in H_0^l(0,L)$ such that 
		$$u=\lambda Bu \,\,\mbox{for some}\,\, 0< \lambda\leq 1,$$
		then 
		$$
		a\left(\frac{u}{\lambda}\right)+\sum_{j=1}^{l}(-1)^{j+1}D^{2j+1}\left(\frac{u}{\lambda}\right)=f-u^kDu \quad \text{in } (0,L)
		$$
		and
		$$D^{i}\left(\frac{u}{\lambda}\right)(0)=D^{i}\left(\frac{u}{\lambda}\right)(L)=D^{l}\left(\frac{u}{\lambda}\right)(L)=0,\,\, i=0,\ldots, l-1, 
		$$
		that is
		\begin{equation}\label{e5.7}
			au+\sum_{j=1}^{l}(-1)^{j+1}D^{2j+1}u+\lambda u^kDu=\lambda f \quad \text{in } (0,L)
		\end{equation}
		and $u$ satisfies the boundary conditions \eqref{e2.2}. 
		
		To prove this Lemma, we need some a priori estimates:
		
		\subsection*{Estimate I:}
		Multiplying \eqref{e5.7} by $u$ and integrating over $(0,L)$, we
		obtain
		\begin{equation}\label{e5.8}
			a\|u\|^2+\sum_{j=1}^{l}(-1)^{j+1}(D^{2j+1}u,u)+\lambda(u^kDu,u)=(\lambda f,u).
		\end{equation}
		Integrating by parts and using \eqref{e2.2}, we get
		$$\lambda (u^kDu,u)=0$$
		and
		$$\sum_{j=1}^{l}(-1)^{j+1}(D^{2j+1}u,u)=\frac{1}{2}(D^lu(0))^2.$$
		Thus \eqref{e5.8} becomes
		$$a\|u\|^2+\frac{1}{2}(D^lu(0))^2=(\lambda f,u)$$
		and
		\begin{equation}\label{e5.9}
			\|u\|\leq \frac{1}{a}\|f\|.
		\end{equation}
		
		\subsection*{Estimate II:}
		Multiplying \eqref{e5.7} by $(1+x)u$ and
		integrating over $(0,L)$, we obtain
		\begin{align}\label{e5.10}
			a(u,(1+x)u)+\sum_{j=1}^{l}(-1)^{j+1}(D^{2j+1}u,(1+x)u)\notag &\\+\lambda(u^kDu,(1+x)u)=(\lambda f,(1+x)u).
		\end{align}
		Since
		$$
		\sum_{j=1}^{l}(-1)^{j+1}(D^{2j+1}u,(1+x)u)=\sum_{j=1}^{l}\left(\frac{2j+1}{2}\right)\|D^ju\|^2+\frac{1}{2}(D^lu(0))^2,
		$$
		integrating by parts and using \eqref{e2.2},\eqref{e3.2}, we get
		\begin{eqnarray}\label{e5.11}
			\lambda(u^kDu,(1+x)u) & = & \lambda(u^kDu,xu)=\frac{\lambda}{k+2}\int_{0}^{L}xD[u^{k+2}]dx \notag \\
			& = & -\frac{\lambda}{k+2}\int_{0}^{L}u^{k+2}dx\leq \frac{1}{k+2}\|u\|_{\infty}^k\|u\|^2 \notag\\
			& \leq & \underbrace{\frac{C_*^k}{k+2}\|u\|^{2+\left(\frac{2l-1}{2l}\right)k}\|D^lu\|^{\frac{k}{2l}}}_I.
		\end{eqnarray}
		\begin{center}
			\bf{Regular case $\mathbf{1\leq k<4l}$.}
		\end{center}	
		By the Young inequality, with $p=\frac{4l}{k}$, $q=\frac{4l}{4l-k}$ and arbitrary $\epsilon_1>0$,
		$$
		I\leq\epsilon_1\frac{k}{4l}\|D^lu\|^2+\frac{1}{\epsilon_1^{\frac{k}{4l-k}}}\left(\frac{4l-k}{4l}\right)\left(\frac{C_*^k}{k+2}\right)^{\frac{4l}{4l-k}}\|u\|^{\frac{8l+(4l-2)k}{4l-k}}.
		$$
		Again by the Young inequality with arbitrary $\epsilon_2>0$,
		$$
		(f,(1+x)u)\leq \frac{\epsilon_2}{2}((1+x),u^2)+ \frac{1}{2\epsilon_2}((1+x),f^2).
		$$
		Therefore, \eqref{e5.10} reduces to the inequality
		$$
		\begin{array}{l}
		\left(a-\frac{\epsilon_2}{2}\right)((1+x),u^2)+\sum_{j=1}^{l-1}\left(\frac{2j+1}{2}\right)\|D^ju\|^2+\left(\frac{2l+1}{2}-\epsilon_1\frac{k}{4l}\right)\|D^lu\|^2 
		\\ \leq \frac{1}{\epsilon_1^{\frac{k}{4l-k}}}\left(\frac{4l-k}{4l}\right)\left(\frac{C_*^k}{k+2}\right)^{\frac{4l}{4l-k}}\|u\|^{\frac{8l+(4l-2)k}{4l-k}}+ \frac{1}{2\epsilon_2}((1+x),f^2).
		\end{array}
		$$
		Taking $\epsilon_1=\frac{4l(2l-1)}{2k}>0$ and $\epsilon_2=a>0$, we get
		
		\begin{align}\label{e5.12}
			\frac{a}{2}((1+x),u^2)+\sum_{j=1}^{l-1}\left(\frac{2j+1}{2}\right)\|D^ju\|^2+\|D^lu\|^2\notag &\\ 
			\leq C_1 \|u\|^{\frac{8l+(4l-2)k}{4l-k}}+ \frac{1}{2a}((1+x),f^2),
		\end{align}
		where $$C_1=\left(\frac{2k}{4l(2l-1)}\right)^{\frac{k}{4l-k}}\left(\frac{4l-k}{4l}\right)\left(\frac{C_*^k}{k+2}\right)^{\frac{4l}{4l-k}}.$$
		Since
		$$((1+x),f^2)=\|f\|^2+(x,f^2)\geq\|f\|^2,$$
		it follows from \eqref{e5.9} that
		$$
		\|u\|^{\frac{8l+(4l-2)k}{4l-k}}\leq \left(\frac{1}{a}\right)^{\frac{8l+(4l-2)k}{4l-k}} ((1+x),f^2)^{\frac{4l+(2l-1)k}{4l-k}}
		$$
		and \eqref{e5.12} implies
		\begin{equation}\label{e5.13}
			\|u\|_{H_0^l}\leq C_2((1+x),f^2)^{\frac{1}{2}},
		\end{equation}
		where 
		$$
		C_2=\frac{1}{\sqrt{\beta}}\left[C_3((1+x),f^2)^{\frac{2lk}{4l-k}}+\frac{1}{2a}\right]^{\frac{1}{2}}
		$$
		with $\beta=\min\{\frac{a}{2},1\}$ and $C_3=C_1a^{-\frac{8l+(4l-2)k}{4l-k}}$.
		
		Rewriting  \eqref{e5.7} in the form
		$$(-1)^{l+1}D^{2l+1}u=\lambda f-au-\sum_{j=1}^{l-1}(-1)^{j+1}D^{2j+1}u-\lambda u^kDu,$$
		we estimate
		\begin{equation}\label{e1}
			\|D^{2l+1}u\|\leq\|f\|+a\|u\|+\sum_{j=1}^{l-1}\|D^{2j+1}u\|+\|u^kDu\|.
		\end{equation}
		For $l=1$ we have $\sum_{j=1}^{l-1}(-1)^{j+1}D^{2j+1}u=0$ and for $l\geq 2$ denote $J=\{1,\ldots, l-1\}$ and 
		$$
		\begin{array}{cc}
		I_1=\{j\in J|\,\, 2j+1\leq l\}, &  I_2=\{j\in J|\,\,l< 2j+1< 2l+1\}.\\
		\end{array}
		$$
		Hence we can write
		\begin{eqnarray}\label{e5.14}
			\|D^{2l+1}u\| & \leq & \|f\|+a\|u\|+\sum_{j\in I_1}\|D^{2j+1}u\|\notag\\
			& + & \sum_{j\in I_2}\|D^{2j+1}u\|+\|u^kDu\|.
		\end{eqnarray}
		By \eqref{e5.13},
		\begin{equation}\label{e5.16}
			a\|u\|+\sum_{j\in I_1}\|D^{2j+1}u\|\leq (a+l)C_2((1+x),f^2)^{\frac{1}{2}}
		\end{equation}
		and by \eqref{e3.2},\eqref{e5.13},
		\begin{equation}\label{e5.17}
			\|u^kDu\| \leq \|u\|_{\infty}^k\|Du\| \leq C_*^k\|u\|_{H_0^1}^{k+1}\leq C_*^k C_2^{k+1}((1+x),f^2)^{\frac{k+1}{2}}.
		\end{equation}
		On the other hand, $l<2j+1< 2l+1$ for all $j\in I_2$. Hence, by \eqref{e3.3}, there are $K_1^j$, $K_2^j$, depending only on $L$ and $l$, such that
		$$
		\|D^{2j+1}u\|\leq K_1^j\|D^{2l+1}u\|^{\theta^j}\|u\|^{1-\theta^j}+K_2^j\|u\| \quad\mbox{with}\quad \theta^j=\frac{2j+1}{2l+1}.
		$$
		Making use of Young's inequality with $p^j=\frac{1}{\theta^j}$, $q^j=\frac{1}{1-\theta^j}$ and arbitrary $\epsilon>0$, we get
		$$
		\|D^{2j+1}u\|\leq \epsilon\|D^{2l+1}u\|+C_4^j(\epsilon)\|u\|+K_2^j\|u\|, 
		$$
		where $C_4^j(\epsilon)=\left[q^j\left(\frac{p^j\epsilon}{({K_1^j})^{p^j}}\right)^{\frac{q^j}{p^j}}
		\right]^{-1}$. Summing over $j\in I_2$ and making use of \eqref{e5.9}, we find
		\begin{equation}\label{e5.15}
			\sum_{j\in I_2}\|D^{2j+1}u\|\leq l\epsilon\|D^{2l+1}u\|+\left(\frac{1}{a}\sum_{j\in I_2}(C_4^j(\epsilon)+K_2^j)\right)\|f\|.
		\end{equation}
		Substituing \eqref{e5.16},\eqref{e5.17} and \eqref{e5.15} into \eqref{e5.14}, we obtain 
		\begin{eqnarray*}
			\|D^{2l+1}u\| & \leq & l\epsilon \|D^{2l+1}u\|+\left(\frac{1}{a}\sum_{j\in I_2}(C_4^j(\epsilon)+K_2^j)\right)((1+x),f^2)^{\frac{1}{2}} \\
			& + & 
			\left(1+(a+l)C_2+C_*^kC_2^{k+1}((1+x),f^2)^{\frac{k}{2}}\right)((1+x),f^2)^{\frac{1}{2}}.
		\end{eqnarray*}
		Taking $\epsilon=\frac{1}{2l}$, we conclude
		\begin{equation}\label{e5.18}
			\|D^{2l+1}u\|\leq   C_5((1+x),f^2)^{\frac{1}{2}},
		\end{equation}
		where $C_5$ depends only on $L$, $l$, $k$, $a$ and $((1+x),f^2)$.
		
		Again by \eqref{e3.3}, for all $i=l+1, \ldots, 2l$, there are $K_1^i$, $K_2^i$ depending only on $L$ and $l$ such that
		$$
		\|D^iu\|\leq K_1^i\|D^{2l+1}u\|^{\theta^i}\|u\|^{1-\theta^i}+K_2^i\|u\| \quad\mbox{with}\quad \theta^i=\frac{i}{2l+1}.
		$$
		Making use of \eqref{e5.9} and \eqref{e5.18}, we get
		\begin{equation}\label{e5.19}
			\|D^iu\|\leq \left(\frac{K_1^i C_5^{\theta^i}}{a^{1-\theta^i}}+\frac{K_2^i}{a}\right)((1+x),f^2)^{\frac{1}{2}}, \quad i=l+1,\ldots, 2l.
		\end{equation}
		Taking into account \eqref{e5.13}, \eqref{e5.18} and \eqref{e5.19}, we obtain \eqref{e2.3}, that is
		\begin{equation*}
			\|u\|_{H^{2l+1}}\leq \mathcal{C}((1+x),f^2)^{\frac{1}{2}}
		\end{equation*}
		with $\mathcal{C}$ depending only on $L$, $l$, $k$, $a$ and $((1+x),f^2)$.
		\begin{center}
			{\bf Critical case $\mathbf{k=4l}$.}
		\end{center} 
		Returning to \eqref{e5.11}, we find
		$$I=\frac{C_*^{4l}}{4l+2}\|u\|^{4l}\|D^lu\|^2\leq \frac{C_*^{4l}}{(4l+2)a^{4l}}\|f\|^{4l}\|D^lu\|^2.
		$$
		Since
		$$
		(f,(1+x)u)\leq \frac{a}{2}((1+x),u^2)+ \frac{1}{2a}((1+x),f^2),
		$$
		we transform \eqref{e5.10} as follows
		\begin{eqnarray*}
			\frac{a}{2}\|u\|^2+\sum_{j=1}^{l-1}\left(\frac{2j+1}{2}\right)\|D^ju\|^2+\left(\frac{2l+1}{2}-\frac{C_*^{4l}}{(4l+2)a^{4l}}\|f\|^{4l}\right)\|D^lu\|^2 \\ 
			+\frac{1}{2}(D^lu(0))^2 \leq \frac{1}{2a}((1+x),f^2).
		\end{eqnarray*}
		For fixed $l$, $a$ and $f\in L^2(0,L)$ such that 
		$$\|f\|<\frac{[(2l+1)(4l+2)]^{\frac{1}{4l}}a}{2^{\frac{1}{4l}}C_*},$$
		we obtain
		$$\frac{2l+1}{2}-\frac{C_*^{4l}}{(4l+2)a^{4l}}\|f\|^{4l}>0.$$
		Therefore
		\begin{equation}\label{e5.20}
			\|u\|_{H_0^l}\leq \frac{1}{\sqrt{2a\gamma_l}}((1+x),f^2)^{\frac{1}{2}}
		\end{equation}
		with $\gamma_l=\min\{\frac{a}{2},\frac{3}{2},\frac{2l+1}{2}-\frac{C_*^{4l}}{(4l+2)a^{4l}}\|f\|^{4l}\}$.
		Retunrning to \eqref{e5.7} and acting as in the regular case with \eqref{e5.20}, we conclude \eqref{e2.5}, that is
		\begin{equation*}
			\|u\|_{H^{2l+1}}\leq \mathcal{C}'((1+x),f^2)^{\frac{1}{2}}
		\end{equation*}
		with $\mathcal{C}'$ depending only on $L$, $l$, $a$ and $((1+x),f^2)$.
	\end{proof}
	Applying Theorem \ref{thm3.4}, we complete the proof of the Theorem \ref{thm1}.
\end{proof}

\section{Uniqueness and Continuous Dependence}
\begin{proof}(of Theorem \ref{thm2}).
	
	We separated  two cases: $l\geq 2$ and $l=1.$\\
	\textbf{For $\mathbf{l\geq 2}$},
	let $u_1$ and $u_2$ be two distinct solutions of \eqref{e2.1}-\eqref{e2.2}. Then the difference $w=u_1-u_2$ satisfies the equation
	\begin{equation}\label{e6.1}
		aw+\sum_{j=1}^{l}(-1)^{j+1}D^{2j+1}w+u_1^kDw+(u_1^k-u_2^k)Du_2=0
	\end{equation}
	and the boundary conditions \eqref{e2.2}.
	
	Multiplying \eqref{e6.1} by $w$ and integrating over $(0,L)$, we obtain
	\begin{equation}\label{e6.2}
		a\|w\|^2+\frac{1}{2}(D^lw(0))^2+\underbrace{(u_1^kDw,w)}_{I_1}+\underbrace{((u_1^k-u_2^k)Du_2,w)}_{I_2}=0.
	\end{equation}
	Integrating by parts and using \eqref{e2.2},\eqref{e3.1},  we get
	\begin{eqnarray*}
		I_1 & = & -\frac{1}{2}\int_{0}^{L}w^2(x)Du_1^k(x)dx\leq \frac{k}{2}\int_{0}^{L}|u_1(x)|^{k-1}|Du_1(x)||w(x)|^2dx\\
		& \leq & \frac{k}{2}\sup_{x\in(0,L)}|u_1(x)|^{k-1}\sup_{x\in(0,L)}|Du_1(x)|\|w\|^2\\ & \leq &  k2^{\frac{k-2}{2}}\|u_1\|_{H_0^l}^k\|w\|^2.
	\end{eqnarray*}
	By \eqref{e3.1},\eqref{e5.6}, we have 
	\begin{eqnarray*}
		|I_2| & \leq & 	\int_{0}^{L}|u_1^k(x)-u_2^k(x)||Du_2(x)||w(x)|dx \\
		& \leq & k2^{k-1}\sup_{x\in(0,L)}|Du_2(x)|\int_{0}^{L}(|u_1(x)|^{k-1}+|u_2(x)|^{k-1})|w(x)|^2dx\\ 
		& \leq & k2^{\frac{2k-1}{2}}\|u_2\|_{H_0^l}\sup_{x\in(0,L)}\{|u_1(x)|^{k-1}+|u_2(x)|^{k-1}\}\|w\|^2\\
		& \leq & k2^{\frac{3k-2}{2}}\|u_2\|_{H_0^l}(\|u_1\|_{H_0^l}^{k-1}+\|u_2\|_{H_0^l}^{k-1})\|w\|^2.
	\end{eqnarray*}	
	Substituting $I_1,I_2$ into \eqref{e6.2}, we reduce it to the inequality
	\begin{equation}\label{e6.3}
		\left(a-k2^{\frac{k-2}{2}}\|u_1\|_{H_0^l}^k-k2^{\frac{3k-2}{2}}\|u_2\|_{H_0^l}(\|u_1\|_{H_0^l}^{k-1}+\|u_2\|_{H_0^l}^{k-1})\right)\|w\|^2\leq 0.
	\end{equation}
	\begin{center}
		\bf{Regular case $\mathbf{1\leq k<4l}$.}
	\end{center}
	Making use of \eqref{e5.13}, we can estimate \eqref{e6.3} as
	\begin{equation}\label{e6.4}
		\left(a-(2{^\frac{k-2}{2}}+2^{\frac{3k}{2}})kC_2^k((1+x),f^2)^{\frac{k}{2}}\right)\|w\|^2\leq 0,
	\end{equation}
	where
	$$C_2=\frac{1}{\sqrt{\beta}}\left[C_3((1+x),f^2)^{\frac{2lk}{4l-k}}+\frac{1}{2a}\right]^{\frac{1}{2}}
	$$
	with $\beta=\min\{\frac{a}{2},1\}$ and $C_3$ depending only on $l$, $k$ and $a$.
	For fixed $l$, $k$ and $a$, assume that 
	\begin{equation}\label{e6.5}
		((1+x),f^2)^{\frac{1}{2}}<\min\left\{\left(\frac{1}{2aC_3}\right)^{\frac{4l-k}{4lk}}, \frac{a^{\frac{1}{k}}}{[(2^{\frac{k-2}{2}}+2^{\frac{3k}{2}})k]^{\frac{1}{k}}(a\beta)^{-\frac{1}{2}}}\right\}.
	\end{equation}
	Then $C_2<\left(\frac{1}{a\beta}\right)^{\frac{1}{2}}$ and consequently
	$$
	\left(a-(2{^\frac{k-2}{2}}+2^{\frac{3k}{2}})kC_2^k((1+x),f^2)^{\frac{k}{2}}\right)> 0.
	$$
	Hence \eqref{e6.4} implies $\|w\|= 0$ and uniqueness is proved for $l\geq 2$ and $1\leq k<4l$.
	\begin{center}
		\bf{Critical case $\mathbf{k=4l}$.}
	\end{center}
	Rewrite \eqref{e6.3} in the form:
	$$
	\left(a-l2^{2l+1}\|u_1\|_{H_0^l}^{4l}-l2^{6l+1}\|u_2\|_{H_0^l}(\|u_1\|_{H_0^l}^{4l-1}+\|u_2\|_{H_0^l}^{4l-1})\right)\|w\|^2\leq 0.
	$$
	Making use of \eqref{e5.20}, we obtain
	$$
	\left(a-l(2^{2l+1}+2^{6l+2})\left(\frac{1}{2a\gamma_l}\right)^{2l}((1+x),f^2)^{2l}\right)\|w\|^2\leq 0,
	$$
	where
	$$\gamma_l=\min\left\{\frac{a}{2},\frac{3}{2},\frac{2l+1}{2}-\frac{C_*^{4l}}{(4l+2)a^{4l}}\|f\|^{4l}\right\}.$$
	For fixed $l$ and $a,$ suppose that
	\begin{equation}\label{e6.6}
		((1+x),f^2)^{\frac{1}{2}}<\min\left\{\frac{[(2l+1)(4l+2)]^{\frac{1}{4l}}a}{2^{\frac{1}{4l}}C_*}, \left(\frac{a}{\eta}\right)^{\frac{1}{4l}}\right\},
	\end{equation}
	where $\eta=l(2^{2l+1}+2^{6l+2})(2a\gamma_l)^{-2l}$. Since $\|f\|\leq ((1+x),f^2)^{\frac{1}{2}},$ it follows that \eqref{e2.4} is satisfied
	and
	$$\left(a-l(2^{2l+1}+2^{6l+2})\left(\frac{1}{2a\gamma_l}\right)^{2l}((1+x),f^2)^{2l}\right)>0.$$
	
	Thus $\|w\|=0$ and uniqueness is proved for $l\geq 2$ and $k=4l$. \\
	{\bf The case $\mathbf{l=1}$.}
	
	The problem \eqref{e2.1}-\eqref{e2.2} becomes:
	\begin{equation}\label{e6.7}
		au+D^3u+u^kDu=f\quad \text{in } (0,L),
	\end{equation}
	\begin{equation}\label{e6.8}
		u(0)=u(L)=Du(L)=0.
	\end{equation}
	Let $u_1$ and $u_2$ be two distinct solutions of \eqref{e6.7}-\eqref{e6.8}. Then the difference $w=u_1-u_2$ satisfies the equation
	\begin{equation}\label{e6.9}
		aw+D^3w+u_1^kDw+(u_1^k-u_2^k)Du_2=0
	\end{equation}
	and the boundary conditions \eqref{e6.8}.

	Multiplying \eqref{e6.9} by $w$ and integrating over $(0,L)$, we obtain
	\begin{equation}\label{e6.10}
		a\|w\|^2+\frac{1}{2}(Dw(0))^2+\underbrace{(u_1^kDw,w)}_{I_1}+\underbrace{((u_1^k-u_2^k)Du_2,w)}_{I_2}=0.
	\end{equation}
	Integrating by parts and using \eqref{e3.1},\eqref{e6.8}, we get
	\begin{eqnarray*}
		I_1 & = & -\frac{1}{2}\int_{0}^{L}Du_1^k(x)w^2(x)dx\leq \frac{k}{2}\int_{0}^{L}|u_1(x)|^{k-1}|Du_1(x)||w(x)|^2dx\\
		& \leq & \frac{k}{2}\sup_{x\in(0,L)}|u_1(x)|^{k-1}\sup_{x\in(0,L)}|Du_1(x)|\|w\|^2\\ & \leq &  k2^{\frac{k-3}{2}}\|u_1\|_{H_0^1}^{k-1}\sup_{x\in(0,L)}|Du_1(x)|\|w\|^2.
	\end{eqnarray*}
	By \eqref{e3.1},\eqref{e5.6}, it follows that
	\begin{eqnarray*}
		|I_2| & \leq & 	\int_{0}^{L}|u_1^k(x)-u_2^k(x)||Du_2(x)||w(x)|dx \\
		& \leq & k2^{k-1}\sup_{x\in(0,L)}\{|u_1(x)|^{k-1}+|u_2(x)|^{k-1}\}\sup_{x\in(0,L)}|Du_2(x)|\|w\|^2\\
		& \leq & k2^{\frac{3(k-1)}{2}}(\|u_1\|_{H_0^1}^{k-1}+\|u_2\|_{H_0^1}^{k-1})\sup_{x\in(0,L)}|Du_2(x)|\|w\|^2.
	\end{eqnarray*}
	Substituting $I_1,I_2$ into \eqref{e6.10}, we get	
	\begin{eqnarray}\label{e6.11}
		a\|w\|^2-k2^{\frac{k-3}{2}}\|u_1\|_{H_0^1}^{k-1}\sup_{x\in(0,L)}|Du_1(x)|\|w\|^2 \notag\\
		-k2^{\frac{3(k-1)}{2}}(\|u_1\|_{H_0^1}^{k-1}+\|u_2\|_{H_0^1}^{k-1})\sup_{x\in(0,L)}|Du_2(x)|\|w\|^2\leq 0.
	\end{eqnarray}
	\begin{center}
		{\bf Regular case $\mathbf{1\leq k<4}$.}
	\end{center}
	By \eqref{e5.9},\eqref{e5.17}, 
	\begin{equation}\label{e6.12}
		\|D^3u_i\|\leq 2\|f\|+C_*^kC_2^{k+1}((1+x),f^2)^{\frac{k+1}{2}}, \,\, i=1,2.
	\end{equation}
	Making use of  \eqref{e3.3},\eqref{e5.9} and \eqref{e6.12}, we estimate 
	\begin{eqnarray*}
		\sup_{x\in (0,L)}|Du_i(x)| & \leq & K_1\|D^3u_i\|^{\frac{1}{2}}\|u_i\|^{\frac{1}{2}}+K_2\|u_i\|\\
		& \leq & \frac{K_1}{2}\|D^3u_i\|+\left(\frac{K_1}{2}+K_2\right)\|u_i\|\\
		& \leq &  \frac{K_1}{2}C_*^kC_2^{k+1}((1+x),f^2)^{\frac{k+1}{2}}+ K_3\|f\|\\
		& \leq & \frac{K_1}{2}C_*^kC_2^{k+1}((1+x),f^2)^{\frac{k+1}{2}}+ K_3((1+x),f^2)^{\frac{1}{2}},		
	\end{eqnarray*}
	where $K_3=\left(K_1+\frac{K_1}{2a}+\frac{K_2}{a}\right)$.
	Returning to \eqref{e6.11} and using \eqref{e5.13}, we find
	\begin{eqnarray*}
		a\|w\|^2-k(2^{\frac{k-3}{2}}+2^{\frac{3(k-1)}{2}})\frac{K_1}{2}C_*^kC_2^{2k}((1+x),f^2)^k\|w\|^2\notag\\
		-k(2^{\frac{k-3}{2}}+2^{\frac{3(k-1)}{2}})C_2^{k-1}K_3((1+x),f^2)^{\frac{k}{2}}\|w\|^2\leq 0.
	\end{eqnarray*}
	Assuming  $((1+x),f^2)^{\frac{1}{2}}\leq1,$ then $((1+x),f^2)^k\leq((1+x),f^2)^{\frac{k}{2}}$. Therefore
	$$
	\left(a-k(2^{\frac{k-3}{2}}+2^{\frac{3(k-1)}{2}})\left(\frac{K_1}{2}C_*^kC_2^{2k}+K_3C_2^{k-1}\right)((1+x),f^2)^{\frac{k}{2}}\right)\|w\|^2\leq 0.
	$$
	For fixed $k$ and $a$ assume that 
	\begin{equation}\label{e6.13}
		((1+x),f^2)^{\frac{1}{2}}<\min\left\{\left(\frac{1}{2aC_3}\right)^{\frac{4-k}{4k}},\left(\frac{a}{K_4}\right)^{\frac{1}{k}}\right\},
	\end{equation}
	where $K_4=k(2^{\frac{k-3}{2}}+2^{\frac{3(k-1)}{2}})(\frac{K_1}{2}C_*^k(a\beta)^{-k}+K_3(a\beta)^{-\frac{k-1}{2}})$. Then 
	$$C_2^{2k}<\left(\frac{1}{a\beta}\right)^k, \,\,\, C_2^{k-1}<\left(\frac{1}{a\beta}\right)^{\frac{k-1}{2}}$$ and
	$$
	\left(a-k(2^{\frac{k-3}{2}}+2^{\frac{3(k-1)}{2}})\left(\frac{K_1}{2}C_*^kC_2^{2k}+K_3C_2^{k-1}\right)((1+x),f^2)^{\frac{k}{2}}\right)> 0.
	$$
	
	This implies $\|w\|=0$ and uniqueness is proved for $l=1$ and $k<4$.
	\begin{center}
		{\bf Critical case $\mathbf{k=4}$.}
	\end{center}
	In this case, \eqref{e6.11} becomes
	\begin{eqnarray}\label{e6.14}
		a\|w\|^2-2^{\frac{5}{2}}\|u_1\|_{H_0^1}^3\sup_{x\in(0,L)}|Du_1(x)|\|w\|^2 \notag\\
		-2^{\frac{13}{2}}(\|u_1\|_{H_0^1}^3+\|u_2\|_{H_0^1}^3)\sup_{x\in(0,L)}|Du_2(x)|\|w\|^2\leq 0.
	\end{eqnarray}
	By \eqref{e5.9},\eqref{e5.20}, 
	\begin{equation}\label{e6.15}
		\|D^3u_i\|\leq 2\|f\|+C_*^4\left(\frac{1}{2a\gamma_1}\right)^{\frac{5}{2}}((1+x),f^2)^{\frac{5}{2}}, \,\, i=1,2,
	\end{equation}
	where $\gamma_1=\min\{\frac{a}{2},\frac{3}{2}-\frac{C_*^4}{6a^4}\|f\|^4\}$.
	Then \eqref{e3.3},\eqref{e5.9},\eqref{e6.15} implies 
	\begin{equation*}
		\sup_{x\in (0,L)}|Du_i(x)|\leq \frac{K_1}{2}C_*^4\left(\frac{1}{2a\gamma_1}\right)^{\frac{5}{2}}((1+x),f^2)^{\frac{5}{2}}+ K_3((1+x),f^2)^{\frac{1}{2}}.
	\end{equation*}
	Making use of \eqref{e5.20}, we rewrite \eqref{e6.14} as 
	\begin{eqnarray*}
		a\|w\|^2-(2^{\frac{5}{2}}+2^{\frac{15}{2}})\frac{K_1}{2}C_*^4\left(\frac{1}{2a\gamma_l}\right)^4((1+x),f^2)^4\|w\|^2\\
		-(2^{\frac{5}{2}}+2^{\frac{15}{2}})K_3\left(\frac{1}{2a\gamma_l}\right)^{\frac{3}{2}}((1+x),f^2)^2\|w\|^2\leq 0.
	\end{eqnarray*}
	Assuming $((1+x),f^2)^{\frac{1}{2}}\leq 1,$ then $((1+x),f^2)^4\leq ((1+x),f^2)^2$. This implies
	\begin{eqnarray*}
		a\|w\|^2-(2^{\frac{5}{2}}+2^{\frac{15}{2}})\frac{K_1}{2}C_*^4\left(\frac{1}{2a\gamma_l}\right)^4((1+x),f^2)^2\|w\|^2\\
		-(2^{\frac{5}{2}}+2^{\frac{15}{2}})K_3\left(\frac{1}{2a\gamma_l}\right)^{\frac{3}{2}}((1+x),f^2)^2\|w\|^2\leq 0.
	\end{eqnarray*}
	For a fixed $a,$ suppose that
	\begin{equation}\label{e6.16}
		((1+x),f^2)^{\frac{1}{2}}<\min\left\{\frac{\sqrt{3}a}{C_*},\left(\frac{a}{K_5}\right)^{\frac{1}{4}}\right\},
	\end{equation}
	where $K_5=(2^{\frac{5}{2}}+2^{\frac{15}{2}})(\frac{K_1}{2}C_*^4(2a\gamma_l)^{-4}+K_3(2a\gamma_l)^{-\frac{3}{2}})$. Then \eqref{e2.4} holds and 
	$$\left(a-(2^{\frac{5}{2}}+2^{\frac{15}{2}})\left(\frac{K_1}{2}C_*^4\left(\frac{1}{2a\gamma_l}\right)^4+K_3\left(\frac{1}{2a\gamma_l}\right)^{\frac{3}{2}}\right)((1+x),f^2)^2\right)>0.$$
	It follows that $\|w\|=0$ and uniqueness is proved for $l=1$ and $k=4$.
	
	This completes the proof of the uniqueness part of Theorem \ref{thm2}.
	
	To show continuous dependence of solutions, consider the case when $l\geq 2$ and $1\leq k<4l$. Let $f_1,f_2\in L^2(0,L)$ satisfy \eqref{e6.5} and $u_1$, $u_2$ be solutions of \eqref{e2.1}-\eqref{e2.2} with the right-hand  sides $f_1$ and $f_2$ respectively. Then, similarly to \eqref{e6.4}, $u_1-u_2$ satisfies the following inequality:
	\begin{equation*}
		\left(a-(2{^\frac{k-2}{2}}+2^{\frac{3k}{2}})k\tilde{C_2}^kM\right)\|u_1-u_2\|\leq \|f_1-f_2\|,
	\end{equation*}
	where
	$$M=\max\{((1+x), f_1^2)^{\frac{1}{2}}, ((1+x), f_2^2)^{\frac{1}{2}}\}$$
	and
	$$\tilde{C_2}=\frac{1}{\sqrt{\beta}}\left[C_3M^{\frac{4lk}{4l-k}}+\frac{1}{2a}\right]^{\frac{1}{2}}.$$
	Making use of \eqref{e6.5}, we obtain
	$$\|u_1-u_2\|\leq C_6 \|f_1-f_2\|$$
	with $C_6=\left(a-(2{^\frac{k-2}{2}}+2^{\frac{3k}{2}})k\tilde{C_2}^kM\right)^{-1}>0$. This proves the continuous dependence for $l\geq 2$ and $1\leq k<4l$. The other cases can be proved in a similar way taking $((1+x),{f_i}^2)^{\frac{1}{2}},\,\, i=1,2$ satisfying \eqref{e6.6}, \eqref{e6.13} and \eqref{e6.16}. Therefore the proof of the Theorem \ref{thm2} is complete.
\end{proof}

\end{document}